\newtheorem{Theorem}{Theorem}
\newtheorem{Proposition}[Theorem]{Proposition}
\newtheorem{Lemma}[Theorem]{Lemma}
\newtheorem{Remark}[Theorem]{Remark}
\newtheorem{Definition}[Theorem]{Definition}
\newtheorem{Corollary}[Theorem]{Corollary}
\newtheorem{Example}[Theorem]{Example}
\providecommand{\keywords}[1]
{
  \textbf{\textit{Keywords---}} #1
}
\providecommand{\subjclass}[1]
{
  \textbf{\textit{2020 Mathematics Subject Classification ---}} #1
}
\begin{document}
\title{Neighborly partitions and the numerators \\ of Rogers-Ramanujan identities}
\author{Zahraa Mohsen, Hussein Mourtada}
\date{}

\maketitle

\begin{abstract}
    We prove two partition identities which are dual to the Rogers-Ramanujan identities. These identities are inspired by (and proved using) a correspondence between three kinds of objects: a new type of partitions (neighborly partitions), monomial ideals and some infinite graphs. 
\end{abstract}

\keywords{Rogers-Ramanujan identities, Integer partitions, monomial ideals, simple graphs, Hilbert series.}

\subjclass{11P84, 11P81,05A17,05A19,05C31,13F55,13D40.}
\section{Introduction}

Among the most famous and ubiquitous formulas involving $q-$series, we find the Rogers-Ramanujan identities:

\begin{eqnarray}    
\sum_{k=0}^\infty\frac{q^{k^2}}{(1-q)\cdots(1-q^k)}&=&\frac{\prod\limits_{j\geq 1,~ j~\equiv~ 0,\pm{2}~\mbox{mod}~5}(1-q^j)}{\prod\limits_{j\geq 1}(1-q^j)},\label{rr1}\\
\sum_{k=0}^\infty\frac{q^{k^2+k}}{(1-q)\cdots(1-q^k)}&=&\frac{\prod\limits_{j \geq 2,~ j~\equiv~ 0,\pm{1}~\mbox{mod}~5}(1-q^j)}{\prod\limits_{j \geq 2}(1-q^j)}\label{rr2}.
\end{eqnarray}

\noindent On the left side of the identities, the term corresponding to $k=0$ is taken to be $1.$ Usually the right side of this identity is written after the obvious simplifications which allow to put $1$ in the numerators. We chose this form, because the numerators play an important role in this article. As revealed by MacMahon \cite{McMahon}, these identities can be stated in the realm of the theory of partitions: recall first that an integer partition of a positive integer $n$ is simply a decreasing sequence of positive integers $\lambda=(\lambda_{1},..,\lambda_{r})$, such that $|\lambda|:=\lambda_{1}+\cdots+\lambda_{r}=n$. The $\lambda_{i}$'s are called the parts of $\lambda$ and $r=:size(\lambda)$ is its size; see \cite{A76,Berndt} for more about the theory of partitions. 

\begin{Theorem}\label{RR}(Rogers-Ramanujan identities) Let $n$ be a positive integer. For $i\in \{1,2\}$, let 
$\mathcal{T}_i(n)$ be the set of partitions of $n$ without equal nor consecutive parts and the part $1$ appears at most $i-1$ times. Let $\mathcal{E}_i ( n )$
be the number of partitions of $n$ into parts congruent to $\pm{( 2 + i )}$ mod $5.$ Then we have

$$\mid \mathcal{T}_i(n) \mid=\mid\mathcal{E}_i(n)\mid.$$

\end{Theorem}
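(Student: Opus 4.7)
The plan is to reduce the theorem to the analytic identities (\ref{rr1}) and (\ref{rr2}) via MacMahon's classical argument, and then to prove those identities using the algebraic framework announced in the abstract.

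For the reduction, I would argue as follows. The factor $q^{k^2}=q^{1+3+\cdots+(2k-1)}$ on the left of (\ref{rr1}) records the ``staircase'' partition with $k$ parts $1,3,\ldots,2k-1$, while $1/((1-q)\cdots(1-q^k))$ generates arbitrary partitions with at most $k$ parts. Adding such a partition component-wise (in decreasing order) to the decreasing staircase $(2k-1,2k-3,\ldots,1)$ yields any element of $\mathcal{T}_2$ with exactly $k$ parts, and preserves the minimal-difference-$2$ condition. Summing over $k$ gives $\sum_n |\mathcal{T}_2(n)|\,q^n$, and the analogous argument with the even staircase $2+4+\cdots+2k$ handles (\ref{rr2}) and $\mathcal{T}_1$. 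On the right-hand side, the numerator cancels the corresponding factors in the denominator, leaving $\prod_{j\equiv \pm(2+i)\bmod 5}1/(1-q^j)=\sum_n |\mathcal{E}_i(n)|\,q^n$. The theorem then follows by comparing coefficients of $q^n$.

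The remaining and harder task is to prove the series--product identities (\ref{rr1}) and (\ref{rr2}) themselves. Following the abstract, I would try to realise the sum side as the Hilbert series of a quotient $R/I$, where $R$ is a polynomial ring in countably many variables and $I$ is a monomial ideal whose generators encode the forbidden configurations (equal and consecutive parts); the neighborly partitions would then be precisely the standard monomials of this quotient, giving a bijection with $\mathcal{T}_i$. Viewing $I$ as the edge ideal of an infinite graph $G$ on the variables recasts the Hilbert-series calculation as a combinatorial problem on $G$ that can be attacked inductively, for example through a short exact sequence isolating the smallest variable, producing a recursion on the truncation level whose limit yields a product formula.

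The hard part is matching this product form exactly with the pentagonal Rogers--Ramanujan numerators. Experience with these identities suggests that the ``$5$'' is intrinsically pentagonal, classically surfacing through a Jacobi triple product applied to a cleverly rearranged sum, and the same phenomenon must appear within the graph/ideal picture. The delicate point is to choose $G$ and the recursion so that the telescoping of Hilbert series reproduces exactly the cancellation $\prod_{j\equiv \pm(2+i)\bmod 5}(1-q^j)/\prod_j(1-q^j)$, rather than merely bounding it. I expect that identifying the right neighborliness condition---tight enough to force the Fibonacci-like recursion underlying Rogers--Ramanujan---will be the crux of the argument.
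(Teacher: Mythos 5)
Your first paragraph is fine: it is the classical MacMahon--Schur reduction, correctly executed. Partitions with no equal or consecutive parts and exactly $k$ parts are obtained by adding a partition with at most $k$ parts componentwise to the staircase $1+3+\cdots+(2k-1)=k^2$ (respectively $2+4+\cdots+2k=k^2+k$ when the part $1$ is forbidden), and the gap condition is preserved; cancelling the numerator against the denominator on the right of (\ref{rr1}) and (\ref{rr2}) gives the generating function of $\mathcal{E}_i(n)$. So Theorem \ref{RR} does reduce to the analytic identities (\ref{rr1}) and (\ref{rr2}). Note, however, that the paper itself offers no proof of Theorem \ref{RR} at all: it is quoted as a known classical result (MacMahon, Rogers--Ramanujan) and is used as an \emph{input} to prove Theorem \ref{theo}.

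The genuine gap is in your second and third paragraphs: they are a research plan, not a proof, and the plan as described cannot close. The algebraic framework you invoke is exactly the one the paper builds (the edge ideal of $G_i^{\infty}$, whose depolarization is $\mathbf{K}[x_j]/\langle x_j^2,x_jx_{j+1}\rangle$, and whose Hilbert--Poincar\'e series is $\sum_h |\mathcal{T}_i(h)|q^h$), but in the paper this machinery only \emph{re-encodes the sum side}; the mod-$5$ product never emerges from the Hilbert-series computation, it is imported from the classical identities (\ref{rr1}) and (\ref{rr2}). The paper is explicit that its argument establishes an equivalence between Corollary \ref{cor} and the Rogers--Ramanujan identities, and that a direct proof of Theorem \ref{theo} would constitute a new proof of Rogers--Ramanujan --- precisely because no such telescoping or short-exact-sequence recursion on truncations is known to produce the pentagonal product $\prod_{j\equiv 0,\pm i \bmod 5}(1-q^j)$. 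The truncated quotients do satisfy a Fibonacci-like recursion (the finite Rogers--Ramanujan polynomials), but extracting the product limit from it requires an actual analytic or bijective ingredient --- Jacobi's triple product together with Rogers' functional equations, Schur's involution, or a comparable argument --- and you supply none of it; you explicitly defer it as ``the crux.'' So your proposal proves only the easy reduction, leaves the series--product identities unproven, and, if read inside this paper's logic, is circular, since the paper's Hilbert-series identities are consequences of Theorem \ref{RR}, not a route to it.
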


\noindent The notation $\mid A \mid$ in the theorem stands for the cardinal of a set $A.$ Observe that the sums side in the $q-series$ identities corresponds to the generating series of the $\mid \mathcal{T}_i(n) \mid$ (first for $i=
2$ and then for $i=1$) and the products side corresponds to  the generating series of the $\mid \mathcal{E}_i(n) \mid.$
These identities appear in many fields other than combinatorics, such as statistical mechanics, number
theory, representation theory, algebraic geometry, probability theory or commutative algebra \cite{Aq,Bax,ASW,GIS,Fu,BMS1,BMS2,GOW,AM,A1,A2,ADJM1,ADJM2}.

The main goal of this article is to prove two identities which are in some sense dual to the Rogers-Ramanujan identities. We begin by introducing the notions appearing in these new identities: neighborly partitions, signature of a neighbourly partition. 

\begin{Definition}
For $i\in \{1,2\}$ and for a positive integer $n,$ we call neighborly partitions of $n$ the set $\mathcal{N}_i(n)$ of partitions $\lambda$ of $n$  which satisfy the following properties:

\begin{enumerate}
\item For every part $\lambda_j$ of $\lambda,$ there exists $l \in \textbf{N}_{>0},~l\not=j$ such that $\mid\lambda_l - \lambda_j\mid \leq 1.$
\item The partition $\lambda$ has at most two equal parts.
\item For every  $ l \in \textbf{N},~~\lambda_l\geq 3-i$ (i.e. for $i=1$, there are no parts  equal to $1).$  
\end{enumerate}

\end{Definition}
The terminology $neighborly$ is inspired by the property 1. which says that for every part $\lambda_j$ of $\lambda,$ there is a \textit{neighbor} part $\lambda_l$ of $\lambda$ which is equal or at a distance $1$ to $\lambda_j.$  As an example, the  integer $4$ has $5$ partitions:  $$4=4=3+1=2+2=2+1+1=1+1+1+1. $$ 

Then the neighborly partitions are $$ \mathcal{N}_1(4)=\{2+2\},~~\mathcal{N}_2(4)=\{2+2,2+1+1\}. $$
Note that the multiplicity (i.e. the number of occurrences) of a part of a neighborly partition is at most two. 
With a neighborly partition $\lambda$, we associate a graph $G_{\lambda}$ as follows: The set $V(G_\lambda)$ of vertices of $G_\lambda$ is in bijection with the set of parts of $\lambda;$ if $h$ is a part of $\lambda$ of multiplicity $1,$ the associated vertex is called $x_{h};$  if $h$ is a part of $\lambda$ of multiplicity $2,$ the vertices associated with the two equal parts are named respectively $x_h$ and $y_h.$ The set $E(G_\lambda)$ of edges of $G_\lambda$ is given by $$E(G_\lambda)=\{(x_{h+1},x_h),(x_l,y_{l}),~~\mbox{for every}~~x_{h+1},x_h,x_l,y_l \in V(G_\lambda) \}. $$

\noindent For example $V(G_{2+1+1})= \{x_2,x_1,y_1\},$ $E(G_{2+1+1})=\{(x_2,x_1),(x_1,y_1)\}$ and $G_{2+1+1}$ has the following shape

\begin{center} 
\begin{tikzpicture}[roundnode/.style={draw,shape=circle,fill=black,minimum size=1mm}]
        \node[roundnode,label={$x_1$}] (x1) {};
        \node[roundnode,label={below:$y_1$}] (y1)[below=of x1] {};
        \node[roundnode,label={above:$x_2$}] (x2)[right=of x1] {};
            
        \draw[] (x1) -- (y1);
        \draw[] (x1) -- (x2);

\end{tikzpicture}\captionof{figure}{$G_{2+1+1}$}

\end{center}
A  subgraph $H$ of $G_{\lambda}$ is said to be vertex-spanning if $V(H)=V(G_{\lambda});$ it is said to be without isolated vertices if any vertex of $H$ is an endpoint of some edge in $E(H).$  

\begin{Definition}
Let $\lambda$ be a neighborly partition. We define the signature $\delta(\lambda)$ of $\lambda$ by

$$\delta(\lambda)=\sum_H(-1)^{\mid E(H)\mid},$$
where $H$ ranges over the vertex spanning subgraphs of $G_\lambda$, which have no isolated vertices and $\mid E(H)\mid$, as mentioned before, is the cardinal of $E(H).$
\end{Definition}


We now are ready to state the main theorem. Let $n$ be a positive integer; for $i \in \{1,2\},$ let $\mathcal{R}_i(n)$ be the set of partitions of $n$ whose parts are larger or equal to $3-i,$ distinct and congruent to $0$ or $\pm{(i)}~$ mod $~5.$

\begin{Theorem} \label{theo} Let $n$ be a positive integer. For $i \in \{1,2\},$ we have the identities

$$\sum_{\lambda \in \mathcal{N}_i(n) }\delta(\lambda)=\sum_{\lambda \in \mathcal{R}_i(n)}(-1)^{size(\lambda)}. $$

\end{Theorem}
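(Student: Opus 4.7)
The plan is to identify both sides of the theorem as the coefficient of $q^n$ in the same formal power series, namely the numerator $\prod_{j\in J_i}(1-q^j)$ of the Rogers-Ramanujan identity of index $i$, where $J_i=\{j\geq 3-i :\, j\equiv 0,\pm i\pmod 5\}$. For the right-hand side, expanding this product term by term is immediate: a finite subset of $J_i$ of cardinality $k$ and sum $n$ is exactly one element of $\mathcal{R}_i(n)$ of size $k$, so $\sum_{\lambda\in\mathcal{R}_i(n)}(-1)^{\mathrm{size}(\lambda)} q^{|\lambda|} = \prod_{j\in J_i}(1-q^j)$.

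For the left-hand side I would introduce the squarefree monomial ideal $\mathcal{I}_i=(x_h^2,\, x_h x_{h+1} : h\geq 3-i)$ in the polynomial ring $R_i=k[x_h : h\geq 3-i]$, graded by $\deg x_h=h$. A monomial avoiding $\mathcal{I}_i$ is squarefree and uses no two consecutive variables, so the standard basis of $R_i/\mathcal{I}_i$ is indexed by $\bigcup_n \mathcal{T}_i(n)$ and $HS(R_i/\mathcal{I}_i,q)=\sum_n|\mathcal{T}_i(n)|\,q^n$. Invoking the Rogers-Ramanujan identities \eqref{rr1}-\eqref{rr2} to clear denominators yields
$$HS(R_i/\mathcal{I}_i,q)\cdot \prod_{h\geq 3-i}(1-q^h) \;=\; \prod_{j\in J_i}(1-q^j).$$
On the other hand, computing the same Hilbert series via the Taylor complex of $\mathcal{I}_i$ expresses it as a signed sum over finite subsets of the generators:
$$HS(R_i/\mathcal{I}_i,q)\cdot \prod_{h\geq 3-i}(1-q^h) \;=\; \sum_F(-1)^{|F|}q^{\deg\mathrm{lcm}(F)}.$$
This is well-defined because each $\mathrm{lcm}(F)$ has all exponents $\leq 2$, so for every fixed $n$ only finitely many $F$ satisfy $\deg\mathrm{lcm}(F)=n$ (one may also reduce to finitely many variables and pass to the limit).

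The combinatorial heart of the proof is to reorganise this Taylor sum by the partition $\lambda(F)$ whose part-multiplicities are the exponents of $\mathrm{lcm}(F)$. With $S=\{h : x_h^2\in F\}$ and $T$ the indices of multiplicity one in $\lambda(F)$, the neighborly condition on $\lambda(F)$ is automatic: any $h\in T$ must participate in some edge generator $x_h x_{h\pm 1}\in F$, providing a neighbor. Conversely, given $\lambda\in\mathcal{N}_i$ with multiplicity sets $S$ and $T$, the preimages of $\lambda$ are exactly $F=\{x_h^2 : h\in S\}\cup F_E$ where $F_E$ runs over subsets of the path edges $\{(h,h+1) : m_h,m_{h+1}\geq 1\}$ of $G_\lambda$ that cover $T$. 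In $G_\lambda$ the pendant vertices $y_h$ (for $h\in S$) are attached only to $x_h$, so every vertex-spanning subgraph $H$ of $G_\lambda$ without isolated vertices must contain every pendant edge $(x_h,y_h)$; these contribute a global sign $(-1)^{|S|}$, while the remaining edges of $H$ are precisely the path-edge subsets $F_E$ covering $T$. Hence $\sum_{F : \lambda(F)=\lambda}(-1)^{|F|}=\delta(\lambda)$, and summing over $\lambda\in\mathcal{N}_i$ gives $\sum_F(-1)^{|F|}q^{\deg\mathrm{lcm}(F)}=\sum_{\lambda\in\mathcal{N}_i}\delta(\lambda)\,q^{|\lambda|}$. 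Chaining the equalities then yields the theorem.

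The main obstacle I expect is the sign bookkeeping in this last step: one must match the forced square generators $x_h^2\in F$ with the forced pendant edges $(x_h,y_h)\in H$, and the free edge generators $x_h x_{h+1}\in F$ with the free path-edge choices in $H$, so that the cardinalities (and hence signs) agree. Beyond this, the remaining ingredients — the identification of standard monomials of $R_i/\mathcal{I}_i$ with $\mathcal{T}_i$, the Euler characteristic computed from the Taylor complex, and the Rogers-Ramanujan identities themselves — can be invoked essentially as black boxes.
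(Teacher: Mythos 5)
Your proposal is correct, and its overall architecture coincides with the paper's: both sides are identified with the Rogers--Ramanujan numerator $\prod_{j\geq 3-i,\ j\equiv 0,\pm i\ (\mathrm{mod}\ 5)}(1-q^j)$, the standard monomials of $\textbf{K}[x_h:h\geq 3-i]/(x_h^2,x_hx_{h+1})$ are matched with $\mathcal{T}_i$, and the Rogers--Ramanujan identities (\ref{rr1})--(\ref{rr2}) are invoked at the end. Where you genuinely diverge is the middle step. The paper passes to the polarization, i.e.\ the edge ideal $\mathcal{I}(G_i^\infty)=(x_jy_j,\,x_jx_{j+1})$: there a subset of generators is literally a subgraph, so grouping the inclusion--exclusion (Taylor) sum by vertex sets yields $\sum_\lambda\delta(\lambda)q^{|\lambda|}$ with no further combinatorics (Lemmas \ref{enum} and \ref{hilb}), and the comparison with the non-polarized ring is outsourced to the polarization formula for Hilbert series, equation (\ref{inf}), quoted from Herzog--Hibi. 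You instead expand the non-polarized ideal $(x_h^2,x_hx_{h+1})$ directly by inclusion--exclusion and prove by hand that grouping the subsets $F$ of generators according to the partition $\lambda(F)$ read off from $\mathrm{lcm}(F)$ reproduces $\delta(\lambda)$; your key observations are exactly the right ones: every $\lambda(F)$ is automatically neighborly, the pendant edges $(x_h,y_h)$ of $G_\lambda$ are forced in any vertex-spanning subgraph without isolated vertices and correspond to the forced square generators $x_h^2\in F$, and the free path edges match the free generators $x_hx_{h+1}$, so $|F|=|S|+|F_E|=|E(H)|$ and the signs agree. In effect your bijection re-proves, in this special case, the combinatorial content that polarization provides for free; what you gain is a more self-contained argument (no citation of the polarization theorem, no multigraded subgraph-enumerating series), what the paper gains is modularity (Lemma \ref{hilb} is valid for arbitrary edge ideals) and a conceptual explanation of the $y_h$-vertices of $G_\lambda$. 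Two small remarks: the ideal $(x_h^2,x_hx_{h+1})$ is not squarefree (only its polarization is), so drop that adjective; and your finiteness-in-each-degree remark is indeed the right justification for the termwise Taylor expansion with infinitely many generators, the alternative being the truncation-and-limit argument the paper uses for (\ref{inf}).
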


\begin{Example}
\begin{enumerate}
    \item For $n=6$ and $i=1$, we have $\mathcal{N}_1(6)=\{3+3\}$ and $\mathcal{R}_1(6)=\{6\}.$ The graph 
    $G_{3+3}$ has the following shape
    
  \begin{center}\begin{tikzpicture}[roundnode/.style={draw,shape=circle,fill=black,minimum size=4mm}]
        \node[roundnode,label={$x_3$}] (x3) {};
        \node[roundnode,label={below:$y_3$}] (y3)[below=of x3] {} ;
        
        \draw[] (x3) -- (y3);
  \end{tikzpicture}\captionof{figure}{$G_{3+3}$}
  \end{center}
  and  it has no (strict) subgraphs which are vertex-spanning without isolated singularities. Hence, we have
  
  $$\sum_{\lambda \in \mathcal{N}_1(6) }\delta(\lambda)=\delta(3+3)=(-1)^{1}=\sum_{\lambda \in \mathcal{R}_1(6)}(-1)^{size(\lambda)}=(-1)^{size(6)}=(-1)^{1}.$$
\item For $n=6$ and $i=2$, we have $\mathcal{N}_2(6)=\{3+3,3+2+1,2+2+1+1\}$ and $\mathcal{R}_2(6)=\phi.$ The graphs corresponding to the partitions $3+3,3+2+1$ have no (strict) strict vertex-spanning subgraphs without isolated singularities ; hence we have $\delta(3+3)=(-1)^1$ and $\delta(3+2+1)=(-1)^2$. The graph $G_{2+2+1+1}$ has the shape

  \begin{center}
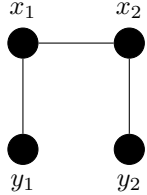
\begin{tikzpicture}[roundnode/.style={draw,shape=circle,fill=black,minimum size=4mm}]
        \node[roundnode,label={$x_1$}] (x1) {};
        \node[roundnode,label={below:$y_1$}] (y1)[below=of x1] {};
        \node[roundnode,label={above:$x_2$}] (x2)[right=of x1] {};
        \node[roundnode,label={below:$y_2$}] (y2)[below=of x2] {};  
        \draw[] (x1) -- (y1);
        \draw[] (x1) -- (x2);
        \draw[] (x2) -- (y2);
\end{tikzpicture}\\\captionof{figure}{$G_{2+2+1+1}$}

\end{center}

and beside $G_{2+2+1+1}$ itself, its only vertex-spanning subgraph without isolated singularities is

\begin{center}\begin{tikzpicture}[roundnode/.style={draw,shape=circle,fill=black,minimum size=4mm}]
        \node[roundnode,label={$x_1$}] (x1) {};
        \node[roundnode,label={below:$y_1$}] (y1)[below=of x1] {};
        \node[roundnode,label={above:$x_2$}] (x2)[right=of x1] {};
        \node[roundnode,label={below:$y_2$}] (y2)[below=of x2] {};  
        \draw[] (x1) -- (y1);
        \draw[] (x2) -- (y2);
\end{tikzpicture}\\

\end{center}
Hence, we have $\delta(2+2+1+1)=(-1)^{3}+(-1)^{2}=0$ and  
$$\sum_{\lambda \in \mathcal{N}_2(6) }\delta(\lambda)=\delta(3+3)+\delta(3+2+1)+\delta(2+2+1+1)=-1+1+0=0=
  \sum_{\lambda \in \mathcal{R}_2(6)}(-1)^{size(\lambda)}.$$

\end{enumerate}
\end{Example}

For $i \in \{1,2\},$ We set $\mathcal{N}_i=\cup_{n \in \textbf{N}_{>0}}  \mathcal{N}_i(n).$ By considering the generating sequence of both sides of Theorem \ref{theo}, we find the following equivalent statement:

\begin{Corollary} \label{cor}  For $i \in \{1,2\},$ we have the identities
$$\sum_{\lambda \in \mathcal{N}_i }\delta(\lambda)q^{\mid \lambda \mid}=
\prod_{j\geq 3-i,j~\equiv~ 0,\pm{i}~\mbox{mod}~5}(1-q^j).$$
\end{Corollary}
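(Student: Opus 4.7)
The plan is to derive Corollary \ref{cor} as a purely formal generating-series consequence of Theorem \ref{theo}; no further combinatorial input is required. I would multiply the identity of Theorem \ref{theo} by $q^{n}$ and sum over $n \geq 1$. Since $\mathcal{N}_i = \bigcup_{n \in \textbf{N}_{>0}}\mathcal{N}_i(n)$ is a disjoint union, swapping the outer and inner sums on the left hand side yields exactly $\sum_{\lambda \in \mathcal{N}_i}\delta(\lambda)\,q^{|\lambda|}$, which matches the left hand side of the corollary.

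The real content of the argument lies in identifying the signed generating series of the right hand side with the claimed product. By definition, $\bigcup_{n}\mathcal{R}_i(n)$ parametrizes partitions into \emph{distinct} parts drawn from the set $S_i := \{j \in \textbf{N}_{>0} : j \geq 3-i \text{ and } j \equiv 0, \pm i \pmod{5}\}$. Therefore the series factors as an independent binary choice at each $j \in S_i$: either include $j$ as a part, contributing $(-1)\cdot q^{j} = -q^{j}$ (the sign coming from the factor $(-1)^{size(\lambda)}$ and the weight from $|\lambda|$), or omit it, contributing $1$. Multiplying these local contributions over all $j \in S_i$ produces
\[
\prod_{j \in S_i}(1-q^{j}) = \prod_{j \geq 3-i,\ j \equiv 0,\pm i \pmod{5}}(1-q^{j}),
\]
as desired.

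No substantive obstacle is expected, since the argument is the standard translation between signed enumeration of partitions into distinct parts and a product expansion. The only point deserving a brief check is the compatibility of the constant terms on the two sides: both evaluate to $1$ provided the empty partition is taken to contribute $1$ (equivalently, $\delta(\emptyset)=1$ and $(-1)^{size(\emptyset)}=1$), which matches the empty-product constant term on the right hand side.
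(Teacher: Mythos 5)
Your formal manipulation is sound as far as it goes: summing the identity of Theorem \ref{theo} against $q^n$ turns the left side into $\sum_{\lambda\in\mathcal{N}_i}\delta(\lambda)q^{|\lambda|}$, and the signed enumeration of partitions into distinct parts from $\{j\geq 3-i:\ j\equiv 0,\pm i \ \mathrm{mod}\ 5\}$ does expand the product $\prod_{j}(1-q^j)$, with the constant terms agreeing under the empty-partition convention. But this is precisely the one-line equivalence the paper itself records when it states the corollary (``by considering the generating sequence of both sides of Theorem \ref{theo}''), and it cannot serve as a proof of Corollary \ref{cor} in this paper, because the logical flow here runs in the opposite direction: Theorem \ref{theo} is never proved independently. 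The paper says explicitly that its proof of the main theorem is a proof that the Rogers-Ramanujan identities are equivalent to Corollary \ref{cor}, and the argument of Section 2.3 proves the corollary directly, Theorem \ref{theo} then following by extraction of coefficients. Taking Theorem \ref{theo} as an input therefore begs the question.

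The paper's own proof of the statement you were given carries all the substance your proposal omits: Lemma \ref{enum} identifies $\sum_{\lambda\in\mathcal{N}_i}\delta(\lambda)q^{|\lambda|}$ with the subgraph-enumerating series of the infinite graph $G_i^{\infty}$ evaluated at $z=-1$; Lemma \ref{hilb} identifies that evaluation, via inclusion-exclusion, with the multigraded Hilbert series of the edge ideal $\mathcal{I}(G_i^{\infty})$, giving Proposition \ref{HP}; the polarization identity (\ref{inf}) reduces this to the Hilbert-Poincar\'e series of the ring with relations $x_j^2$ and $x_jx_{j+1}$, whose monomial basis is counted by $\sum_h|\mathcal{T}_i(h)|q^h$; and finally the Rogers-Ramanujan identities (\ref{rr1})--(\ref{rr2}) convert that sum side into the product side, yielding the corollary. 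To make your route legitimate you would need an independent proof of Theorem \ref{theo}, which the paper does not supply --- indeed it remarks that such a direct proof would itself constitute a new proof of the Rogers-Ramanujan identities.
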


One observes that the right side of the identities in corollary \ref{cor} corresponds to the numerators of the identities (\ref{rr1}) and (\ref{rr2}). As we will show later, the left side of the identities in the theorem is related to the left side 
of the identities (\ref{rr1}) and (\ref{rr2}). Here we use the Rogers-Ramanujan identities to prove theorem \ref{theo}. The proof of the main theorem is actually a proof that the Rogers-Ramanujan identities are equivalent to corollary
\ref{cor}; in particular, a direct proof (i.e. which does not use the Rogers-Ramanujan identities) of theorem \ref{theo} would also give a proof of the Rogers-Ramanujan identities.

\section{Strategy and proof of the main result}

We will divide the scheme of the proof into three steps which are the subject of the three subsections of this section.

\begin{enumerate}
    \item For $i\in \{1,2\},$ we interpret the $q-$series $\sum_{\lambda \in \mathcal{N}_i }\delta(\lambda)q^{\mid \lambda \mid}$ (see corollary \ref{cor}) in terms of a generating series $S_i$ "counting" the subgraphs of an infinite (simple) graph $G_i^{\infty}.$
    \item For $i\in \{1,2\},$ we give a formula relating $S_i$ to the Hilbert series $\mathcal{H}_i$ of a graded ring which is the quotient of polynomial ring with a (infinite) countable number of variables: In this step, we use the notion of edge ideals, and the result that we prove is valid for any edge ideal.
    \item For $i\in \{1,2\},$ we describe a formula relating $\mathcal{H}_i$ in terms of the generating series of  $\mid \mathcal{T}_i(n) \mid$ (see theorem \ref{RR}). Then we use the Rogers-Ramanujan identities to obtain the main results.
    
\end{enumerate}

All the graphs that we will consider are simple, i.e they do not have more than one edge between any two vertices and no edge starts and ends at the same vertex.

\subsection{Neighborly partitions and enumerating subgraphs}
For $i\in \{1,2\},$ we begin by considering the infinite graph $G_i^{\infty}$ whose set  of vertices is 
$V(G_i^{\infty})=\{x_j,y_j;j\in \textbf{N},j\geq 3-i\}$ and whose set of edges is $E(G_i^{\infty})=\{(x_j,x_{j+1}),(x_j,y_j);j\geq 3-i\}.$ Notice that for a neighborly partition $\lambda \in \mathcal{N}_i,$ the graph $G_\lambda$ is an \textit{induced} subgraph of $G_i^{\infty}$ which has no isolated vertices. 
 So $G_i^{\infty}$ has the following shape
\begin{center}\begin{tikzpicture}[roundnode/.style={draw,shape=circle,fill=black,minimum size=4mm}]
        \node[roundnode,label={$x_{3-i}$}] (x2) {};
        \node[roundnode,label={below:$y_{3-i}$}] (y2)[below=of x2] {};
        \node[roundnode,label={above:$x_{4-i}$}] (x3)[right=of x2] {};
        \node[roundnode,label={below:$y_{4-i}$}] (y3)[below=of x3] {};
        \node[roundnode,label={above:$x_{l-1}$}] (xl-1)[right=of x3] {};
        \node[roundnode,label={below:$y_{l-1}$}] (yl-1)[below=of xl-1] {};
         \node[roundnode,label={above:$x_l$}] (xl)[right=of xl-1] {};
        \node[roundnode,label={below:$y_l$}] (yl)[below=of xl] {};
        \draw[] (x1) -- (y1);
        \draw[] (x1) -- (x2);
        \draw[] (x2) -- (y2);
        \draw[] (x2) -- (x3);
        \draw[] (x3) -- (y3);
        \draw[] (xl) -- (yl);
         \draw[] (xl-1) -- (xl);
        \draw[] (xl-1) -- (yl-1);
        \path (x3) -- (xl-1) node [right=0.5mm of x3,black, font=\Huge] {$\dots$};  
        \node[right=0.5mm of xl,black, font=\Huge] {$\dots$};
\end{tikzpicture}\end{center}

Recall that a subgraph $H$ is said to be induced if an edge of $G$ is an edge of $H$ whenever its endpoints are vertices of $H.$ In particular, such a subgraph is completely determined by it vertices. Conversely any induced subgraph of $G_i^{\infty},i=1,2, $  without isolated singularities is of the type $G_\lambda$
for some $\lambda \in \mathcal{N}_i.$ 

\begin{Definition}
Let $G$ be a simple graph, let $V(G)=\{v_j,j\in I\}$ be its set of vertices that we assume countable. We call the multivariable subgraph enumerating series  of $G$ the series in the variables $(v_j)_{j \in I}$ and which is defined by

$$S_G(\textbf{v},z)=\sum_{H}(\prod_{v_j\in V(H)}v_j)z^{\mid E(H) \mid},$$
where $H$ ranges over finite subgraphs of $G$ without isolated vertices and where we denoted by $\textbf{v}$ the multivariable $(v_j)_{j\in I}.$

\end{Definition}

Note that, since we are only considering finite subgraphs, the monomials in the multivariable subgraph enumerating series make intervene a finite number of variables. We will express the series corollary \ref{cor} in terms of the
multivariable subgraph enumerating series of $G_i^{\infty},i=1,2.$ Recall the vertices $v_j$ of $G_i^{\infty},i=1,2$ belong to $\{x_j,y_j;j\in \textbf{N},j\geq 3-i\}.$ Denote by 
$\textbf{x}$ the multivariable $(x_j)_{j\geq 3-i}$ and by $\textbf{y}$ the multivariable $(y_j)_{j\geq 3-i}.$  
\begin{Lemma}\label{enum} For $i \in \{1,2\},$ set $S_i(\textbf{v},z):=S_{G_i^{\infty}}(\textbf{v},z)=S_i(\textbf{x},\textbf{y},z)$. Let $S_i^w(q,z)$ be the series which is obtained from $S_i(\textbf{x},\textbf{y},z)$ by substituting $x_j$
and $y_j$ by $q^j.$ We have 

$$\sum_{\lambda \in \mathcal{N}_i }\delta(\lambda)q^{\mid \lambda \mid}=S_i^w(q,-1).$$

\end{Lemma}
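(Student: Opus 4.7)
The plan is to rewrite $S_i^w(q,-1)$ as an iterated sum indexed first by a neighborly partition $\lambda \in \mathcal{N}_i$ and then, for fixed $\lambda$, by the vertex-spanning subgraphs of $G_\lambda$ without isolated vertices, and to recognize the inner sum as the signature $\delta(\lambda)$. After performing the substitutions $x_j, y_j \mapsto q^j$ and $z \mapsto -1$ in the definition of $S_i$, one obtains
$$S_i^w(q,-1) = \sum_H (-1)^{|E(H)|}\prod_{v_j \in V(H)} q^j,$$
where the sum ranges over finite subgraphs $H$ of $G_i^\infty$ having no isolated vertex.

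The first step is to associate to each such $H$ the multiset $\lambda_H$ whose parts are the indices of the vertices of $V(H)$: each vertex $x_j$ or $y_j$ lying in $V(H)$ contributes one part equal to $j$. With this convention, $\prod_{v_j \in V(H)} q^j = q^{|\lambda_H|}$. I then need to verify that $\lambda_H$ is a neighborly partition in $\mathcal{N}_i$: the bound $\lambda_j \geq 3-i$ is built into the index range of the vertex set of $G_i^\infty$; each value of a part appears with multiplicity at most two because for each index $j$ there are exactly the two vertices $x_j$ and $y_j$; and the neighbor condition follows from the absence of isolated vertices in $H$ combined with the fact that the only edges of $G_i^\infty$ are of the form $(x_j, x_{j+1})$ and $(x_j, y_j)$, so every non-isolated vertex of $V(H)$ has a partner in $V(H)$ of index equal or differing by one.

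The second step is to invoke the observation already recorded in the paper that the induced subgraphs of $G_i^\infty$ without isolated vertices are exactly the graphs $G_\lambda$ for $\lambda \in \mathcal{N}_i$. Hence the fibre of the map $H \mapsto \lambda_H$ over a given $\lambda \in \mathcal{N}_i$ consists precisely of those subgraphs $H$ of $G_i^\infty$ whose vertex set is $V(G_\lambda)$ and which have no isolated vertex, i.e.\ exactly the vertex-spanning subgraphs of $G_\lambda$ without isolated vertices appearing in the definition of $\delta(\lambda)$. Regrouping the sum by the value of $\lambda_H$ then yields
$$S_i^w(q,-1) = \sum_{\lambda \in \mathcal{N}_i} q^{|\lambda|}\sum_{H} (-1)^{|E(H)|} = \sum_{\lambda \in \mathcal{N}_i} \delta(\lambda)\, q^{|\lambda|}.$$

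The only subtle point, and in my view the main obstacle, is the verification carried out in step one: namely, that the neighbor condition defining a neighborly partition is automatically forced whenever $H$ has no isolated vertex. This is really a structural observation about the very specific shape of $G_i^\infty$ (a "ladder-like" graph whose only edges connect $x_j$ to $x_{j+1}$ or to $y_j$); once this is in place the rest of the argument is just reindexing and the definition of $\delta(\lambda)$.
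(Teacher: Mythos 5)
Your proposal is correct and follows essentially the same route as the paper's proof: group the finite subgraphs of $G_i^{\infty}$ without isolated vertices according to their vertex set, identify that vertex set (via the induced subgraph) with $G_\lambda$ for a neighborly partition $\lambda$, and recognize the inner sum over vertex-spanning subgraphs without isolated vertices, evaluated at $z=-1$, as $\delta(\lambda)$. The only difference is cosmetic (you substitute $q$ and $z=-1$ before regrouping, the paper after), so there is nothing further to add.
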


\begin{proof}
For $i\in \{1,2\},$ we have $$S_i(\textbf{v},z)=\sum_{H}(\prod_{v_j\in V(H)}v_j)z^{\mid E(H) \mid}$$
$$=\sum_{V\subset V(G_i^{\infty})}(\sum_{\{H;V(H)=V\}}(\prod_{v_j\in V}v_j)z^{\mid E(H) \mid})=\sum_{V\subset V(G_i^{\infty})}(\prod_{v_j\in V}v_j)(\sum_{\{H;V(H)=V\}}z^{\mid E(H)\mid}), $$
 where $H$ ranges over finite subgraphs of $G_i^{\infty}$ without isolated vertices and $V$ over the sets of vertices of such subgraphs $H.$ So we have gathered the subgraphs which have the same set of vertices $V$ together. The induced subgraph of $G_i^{\infty}$ whose set of vertices is such a $V$ is of the form $G_{\lambda}$ for some $\lambda \in \mathcal{N}_i.$ In particular, if we replace in $\prod_{v_j\in V}v_j$ the $x_j$'s and the $y_j$'s by $q^j,$ we obtain $q^{|\lambda|}.$  
 Moreover if we set $\varphi(z)=\sum_{\{H;V(H)=V\}}z^{\mid E(H)\mid},$ we obtain by the definition of the signature that $\delta(\lambda)=\varphi(-1)$ and the lemma follows.

\end{proof}

\subsection{Multigraded Hilbert series of Edge ideals and enumerating subgraphs}
Let $G$ be a simple graph, let $V(G)=\{v_j,j\in I\}$ be its set of vertices that we assume countable or finite. Let $E(G)$ be set of edges of $G$ that we also assume countable or finite (this is sufficient for our purposes).
Let $\textbf{K}$ be a field of zero characteristic. We consider the ring of polynomials $A=\textbf{K}[\textbf{v}]=\textbf{K}[v_j, j\in I]$ whose variables range in the set of vertices $V(G).$ With the graph $G$ we associate its edge ideal which is a square-free monomial ideal generated by
$$\mathcal{I}(G)=<v_jv_l \mid (v_j,v_l) \in E(G)>.$$
We are interested in the multigraded Hilbert series which counts the monomials in the quotient ring of the polynomial ring by an edge ideal.

\begin{Definition}\label{Mon}
Let $G$ be a simple graph as above. The multigraded Hilbert series of 
$A/\mathcal{I}(G)$ is the series $\mathcal{H}_G$ in the variables $v_j, j\in I$ which is defined by
$$\mathcal{H}_{A/\mathcal{I}(G)}(\textbf{v})=\mathcal{H}_G(\textbf{v})=\sum_{M \not\in \mathcal{I}(G)} M, $$
where $M$ ranges over the monomials of $A$ (making intervene a finite number of variables) which are not in the ideal 
$\mathcal{I}(G).$

\end{Definition}

Note that the multigraded Hilbert series of $A/\mathcal{I}(G)$ "counts" the monomials in $A$ which are not zero in $A/\mathcal{I}(G).$ We have the following formula expressing the multigraded Hilbert series in terms of the multivariable subgraph enumerating series.

\begin{Lemma}\label{hilb}

Let $G$ be a simple graph as above. We have

$$\mathcal{H}_G(\textbf{v})=\frac{S_G(\textbf{v},-1)}{\prod_{j\in I}(1-v_j)}. $$
\end{Lemma}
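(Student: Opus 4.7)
The plan is to compute $\mathcal{H}_G(\textbf{v})$ by applying inclusion--exclusion to the generators of $\mathcal{I}(G)$, and then to recognize the resulting alternating sum as $S_G(\textbf{v},-1)$. A monomial $M\in A$ fails to lie in $\mathcal{I}(G)$ precisely when no generator $v_jv_l$, $(v_j,v_l)\in E(G)$, divides $M$. Applying inclusion--exclusion over finite subsets $F$ of $E(G)$, this gives
$$[M\notin\mathcal{I}(G)]=\sum_{F\subset E(G)}(-1)^{|F|}\prod_{(v_j,v_l)\in F}[\,v_jv_l\mid M\,],$$
where the product of indicators equals the indicator that every generator indexed by $F$ divides $M$, i.e., that $\operatorname{lcm}\{v_jv_l:(v_j,v_l)\in F\}$ divides $M$.

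The next step is to identify this lcm combinatorially. Since each generator is squarefree of degree $2$ and $G$ is simple, one has $\operatorname{lcm}\{v_jv_l:(v_j,v_l)\in F\}=\prod_{v_k\in V(H_F)}v_k$, where $H_F$ denotes the finite subgraph of $G$ with edge set $F$ and vertex set equal to the endpoints of the edges in $F$. In particular $H_F$ has no isolated vertex, and the assignment $F\mapsto H_F$ is a bijection between finite edge subsets of $G$ and finite subgraphs of $G$ without isolated vertices. For any squarefree monomial $u$, the geometric-type identity $\sum_{M:\,u\mid M}M=u/\prod_{j\in I}(1-v_j)$ holds by factoring $M=u\cdot M'$ with $M'$ ranging over all monomials of $A$.

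Combining the two steps and interchanging the summations over $F$ and $M$:
$$\mathcal{H}_G(\textbf{v})=\sum_{M\notin\mathcal{I}(G)}M=\sum_{F}(-1)^{|F|}\frac{\prod_{v_k\in V(H_F)}v_k}{\prod_{j\in I}(1-v_j)}=\frac{1}{\prod_{j\in I}(1-v_j)}\sum_H(-1)^{|E(H)|}\prod_{v_k\in V(H)}v_k,$$
where in the last sum $H$ runs over finite subgraphs of $G$ without isolated vertices via the bijection $F\mapsto H_F$. The right-hand side is exactly $S_G(\textbf{v},-1)/\prod_{j\in I}(1-v_j)$, which is the claimed identity.

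I do not expect a genuine obstacle here: the argument is the standard inclusion--exclusion/Taylor expansion for a squarefree monomial ideal. The only point that merits care is the formal bookkeeping in $\textbf{K}[[v_j:j\in I]]$ when $I$ is infinite, but since every monomial of $A$ has finite support, only finitely many subsets $F$ and monomials $M'$ can contribute to the coefficient of any fixed monomial, so each of the manipulations above is a legitimate equality of formal multivariable power series.
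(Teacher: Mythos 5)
Your argument is correct and is essentially the paper's own proof: both proceed by inclusion--exclusion over finite sets of generators of $\mathcal{I}(G)$, identify $\operatorname{lcm}$'s of squarefree degree-two generators with vertex sets of the corresponding edge subgraphs (which are exactly the finite subgraphs without isolated vertices), and factor out $\mathcal{H}_A(\textbf{v})=1/\prod_{j\in I}(1-v_j)$ to recognize the alternating sum as $S_G(\textbf{v},-1)$. Your remark on the formal legitimacy of the manipulations when $I$ is infinite is a nice touch but does not change the substance.
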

\begin{proof}
First one remarks that (one may think of the one variable and then two variables cases to be convinced)
$$\mathcal{H}_A(\textbf{v})=\frac{1}{\prod_{j\in I}(1-v_j)}.$$
Now recall that a monomial belongs to a monomial ideal if and only if it is divisible by at least one of its generators. Let $M=\{m_1,m_2,\ldots\}$ be the set of generators of $\mathcal{I}(G)$ which are in bijection with the edges of $G.$ We have the formula
$$ \mathcal{H}_{A/\mathcal{I}(G)}(\textbf{v})=\mathcal{H}_{A}(\textbf{v})
-\sum_{m\in M}m \mathcal{H}_{A}(\textbf{v})$$
$$+\sum_{\{m_{j_1},m_{j_2}\}\subset M}  lcm(m_{j_1},m_{j_2})\mathcal{H}_{A}(\textbf{v})+\cdots+$$ 
$$(-1)^k \sum_{\{m_{j_1},m_{j_2},\ldots,m_{j_k}\}\subset M}  lcm(m_{j_1},m_{j_2},\ldots,m_{j_k})\mathcal{H}_{A}(\textbf{v})+\cdots=
$$ 
$$\mathcal{H}_{A}(\textbf{v})[1-\sum_{m\in M}m+\sum_{\{m_{j_1},m_{j_2}\}\subset M}  lcm(m_{j_1},m_{j_2})+\cdots+$$ 
\begin{equation}\label{incexl}
(-1)^k \sum_{\{m_{j_1},m_{j_2},\ldots,m_{j_k}\}\subset M}  lcm(m_{j_1},m_{j_2},\ldots,m_{j_k})+\cdots],
\end{equation}
where $lcm$ stands for the least common multiple. The formula expresses that the  (non-zero) monomials
$A/\mathcal{I}(G)$ are the monomials in $A$ except those which are divisible by one of the $m_j'$s. When taking out the monomials which are divisible by one of the $m_j$ (which is expressed by $-\sum_{m\in M}m \mathcal{H}_{A}(\textbf{v})$), we take out twice those which are divisible at the same time by some $m_{j_1}$ and $m_{j_2}$ (i.e. by $lcm(m_{j_1},m_{j_2}$)); so we need to add once those which are divisible by both (this is expressed by adding 
$\sum_{\{m_{j_1},m_{j_2}\}\subset M}  lcm(m_{j_1},m_{j_2})\mathcal{H}_{A}(\textbf{v}))$ hence adding twice those which are divisible by three and so on: this is simply the inclusion exclusion-principle since the monomials in $\mathcal{I}(G)$ are those which belong to the union of the ideals generated by the $m_j'$s.  Now remark that choice of $k$ monomials  $m_{j_1},m_{j_2},\ldots,m_{j_k}$ corresponds to the choice of $k$ edges of $G,$ i.e. a subgraph of $G$ having $k$ edges and  the variables appearing in the associated monomial (which are square free, the graph $G$ being simple) correspond to vertices of this subgraph; hence we have $$1-\sum_{m\in M}m+\cdots +(-1)^k \sum_{\{m_{j_1},m_{j_2},\ldots,m_{j_k}\}\subset M}  lcm(m_{j_1},m_{j_2},\ldots,m_{j_k})+\cdots=S_G(\textbf{v},-1)$$
and the proposition follows.

\end{proof} 

\begin{Remark}
\begin{enumerate}
  \item Note that the formula (\ref{incexl}) in lemma \ref{hilb} can also be seen as a direct application of the Taylor resolution (see \cite{Pe}) of the monomial ideal $\mathcal{I}(G).$
\item A variant of the formula in lemma \ref{hilb} exists in the literature, often for special gradings \cite{Go}; however, this exact statement will be needed in the next subsection and the proof makes the paper self-contained.
\end{enumerate}
\end{Remark}

\subsection{Proof of the main result}
For $i=1,2$ we now consider the $\textbf{K}-$algebra $\mathcal{P}_i:=K[x_j,y_j,j\geq 3-i]/\mathcal{I}(G_i^{\infty})$ which, by definition of the ideal $\mathcal{I}(G_i^{\infty})$ is  equal to $$\frac{\textbf{K}[x_j,y_j,j\geq 3-i]}{<x_jy_j,x_jx_{j+1},j\geq 3-i>}.$$
By giving to $x_j$ and $y_j,$ for $j\geq 3-i,$ the weight $j,$ the algebra $\mathcal{P}$ inherits the structure of a graded algebra, \textit{i.e.} we can write $$\mathcal{P}_i=\oplus_{h \in \textbf{Z}_{\geq 0}} \mathcal{P}_{i,h}$$
where the $\mathcal{P}_{i,h}$'s are additive groups satisfying $\mathcal{P}_{i,h}.\mathcal{P}_{i,h'}
\subset \mathcal{P}_{i,h+h'}.$ By abuse of notation, the 
$\mathcal{P}_{i,h}$ is a $\textbf{K}-$ vector space generated by all the monomials which does not belong to $\mathcal{I}(G_i^{\infty})$ and which have weight equal to $h.$ The Hilbert-Poincar\'e series of the graded algebra $\mathcal{P}_i$ is by definition
$$\mathcal{HP}_{\mathcal{P}_i}(q):=\sum_{h \in \textbf{Z}_{\geq 0}}
dim_\textbf{K}\mathcal{P}_{i,h}q^h.$$
One notices that the monomials in the varaibles $x_j,y_j$ which are of weight 
$h$ are exactly those such that when we replace $x_j$ and $y_j$ by $q^j,$ we obtain $q^h:$ for instance, $x_jx_{j'}$ is of weight $j+j'$ and the mentioned substitution gives $q^{j+j'}.$ Hence, if we set $\mathcal{H}^w_{\mathcal{P}_i}(q)$ to be the series which is obtained from 
$\mathcal{H}_{G_i^\infty}(\textbf{v})$ (see Definition \ref{Mon}) by substituting $x_j$ and $y_j$ by $q^j$ we find

$$\mathcal{HP}_{\mathcal{P}_i}(q)= \mathcal{H}^w_{\mathcal{P}_i}(q).$$
Applying lemma \ref{hilb} and lemma \ref{enum} we deduce the following:

\begin{Proposition}\label{HP}
We have $$\mathcal{HP}_{\mathcal{P}_i}(q)=\frac{ \sum_{\lambda \in \mathcal{N}_i }\delta(\lambda)q^{\mid \lambda \mid}}{\prod_{j\geq 3-i}(1-q^j)^2}.$$
\end{Proposition}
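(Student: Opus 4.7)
The plan is essentially to combine the two lemmas already established (Lemma \ref{enum} and Lemma \ref{hilb}) with the observation, already made in the paragraph preceding the proposition, that the Hilbert--Poincar\'e series in $q$ is exactly the specialization of the multigraded Hilbert series under the weight substitution $x_j,y_j \mapsto q^j$.

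First I would recall the identification
\[
\mathcal{HP}_{\mathcal{P}_i}(q) \;=\; \mathcal{H}^w_{\mathcal{P}_i}(q),
\]
which holds because monomials of weight $h$ in $\mathbf{K}[x_j,y_j,j\geq 3-i]/\mathcal{I}(G_i^\infty)$ are precisely those monomials not in $\mathcal{I}(G_i^\infty)$ whose image under $x_j,y_j\mapsto q^j$ equals $q^h$, and these monomials form a $\mathbf{K}$-basis of $\mathcal{P}_{i,h}$.

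Next I would apply Lemma \ref{hilb} to the graph $G_i^\infty$ (with countable vertex set $\{x_j,y_j:j\geq 3-i\}$), which yields
\[
\mathcal{H}_{G_i^\infty}(\mathbf{x},\mathbf{y}) \;=\; \frac{S_i(\mathbf{x},\mathbf{y},-1)}{\displaystyle\prod_{j\geq 3-i}(1-x_j)(1-y_j)}.
\]
Since the weight substitution $x_j,y_j\mapsto q^j$ is a well-defined homomorphism of formal power series rings (each monomial involves only finitely many variables and each $q^h$-coefficient of the specialized series is a finite sum), I can apply it termwise to both sides. The denominator becomes $\prod_{j\geq 3-i}(1-q^j)^2$, and the numerator becomes $S_i^w(q,-1)$ by the very definition of $S_i^w$ in Lemma \ref{enum}.

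Finally I would invoke Lemma \ref{enum} to rewrite $S_i^w(q,-1) = \sum_{\lambda\in\mathcal{N}_i}\delta(\lambda)q^{|\lambda|}$, producing the claimed formula. There is no real obstacle here; the only point requiring a small amount of care is that the weight substitution commutes with the enumerative identities of Lemmas \ref{enum} and \ref{hilb}, but this is straightforward because each coefficient of $q^h$ on either side comes from finitely many monomials (the parts of a neighborly partition of $|\lambda|=h$ are bounded, and only finitely many variables have weight $\leq h$), so no convergence issue arises when passing from the multivariable series to the single-variable specialization.
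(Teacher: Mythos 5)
Your proposal is correct and follows essentially the same route as the paper: the paper likewise identifies $\mathcal{HP}_{\mathcal{P}_i}(q)$ with the weight specialization $\mathcal{H}^w_{\mathcal{P}_i}(q)$ and then deduces the proposition by applying Lemma \ref{hilb} and Lemma \ref{enum} under the substitution $x_j,y_j\mapsto q^j$. Your extra remark that each $q^h$-coefficient only involves finitely many variables just makes explicit a point the paper leaves implicit.
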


To make the link with Rogers-Ramanujan identities we will consider polarization of monomial ideals: this is a procedure that allows to associate with a monomial ideal a squarefree monomial ideal in a polynomial ring which has more variables. The interesting fact is that invariants of both ideals are very related. The procedure amounts to replace any power of the type $x^e$ (in some polynomial ring having $x$ as a variable) by the monomial $xy_1\ldots y_{e-1}$ where the $y_i$ are variables in a new polynomial ring extending the variables in the original polynomial ring. By applying this process to the generators of a monomial ideal, we obtain a squarefree monomial ideal. The important examples for us are the ideals $$\mathcal{I}(G_i^\infty)=<x_jy_j,x_jx_{j+1},j\geq 3-i> \subset \textbf{K}[x_j,y_j,j\geq 3-i],\  \text{for} \ i=1,2.$$
These are the polarization (this is why we called our ring above $\mathcal{P}_i$)
of the ideals 

$$<x_j^2,x_jx_{j+1},j\geq 3-i> \subset \textbf{K}[x_j,j\geq 3-i], $$
where the grading is induced from giving to $x_j$ the weight $j.$ We have the following (Corollary 1.6.3 in \cite{HH}, see also \cite{MiS,Pe}):

\begin{equation}\label{inf}
    \mathcal{HP}_{\mathcal{P}_i}(q)=
\frac{\mathcal{HP}_{\textbf{K}[x_j,j\geq 3-i]/<x_j^2,x_jx_{j+1},j\geq 3-i>}(q)}{\prod_{j\geq 3-i}(1-q^j)}.
\end{equation}

Note that, in \textit{loc. cit.}, the proof is given for finitely generated ideals. But this extends easily to our situation. Indeed,  the $\textbf{K}-$ algebra 
$$R:=\textbf{K}[x_j,j\geq 3-i]/<x_j^2,x_jx_{j+1},j\geq 3-i>$$ is the inductive limit
of the $\textbf{K}-$finitely generated algebra
$$R_n=\textbf{K}[x_j, 3-i\leq j\leq n+1]/<x_j^2,x_jx_{j+1},3-i \leq j \leq n>.$$
Moreover, since the weights of the variables $x_j$ are growing we can see that 
$$\mathcal{HP}_{R_n}(q)=\mathcal{HP}_{R}(q)~~\mbox{modulo}~~q^n.$$
This implies 

$$ \lim\limits_{n \rightarrow +\infty} \mathcal{HP}_{R_n}(q)=\mathcal{HP}_{R}(q),$$
and that the case of finitely generated ideals gives the equality (\ref{inf}).\\

Now everything is settled down for the proof of Corollary \ref{cor} (which is equivalent to Theorem \ref{theo}).

\begin{proof} On one hand, from Proposition \ref{HP} and the equality (\ref{inf}) we otain that for $i\in \{1,2\}$ we have
\begin{equation}\label{avd}
\frac{ \sum_{\lambda \in \mathcal{N}_i }\delta(\lambda)q^{\mid \lambda \mid}}{\prod_{j\geq 3-i}(1-q^j)^2}=\frac{\mathcal{HP}_{\textbf{K}[x_j,j\geq 3-i]/<x_j^2,x_jx_{j+1},j\geq 3-i>}(q)}{\prod_{j\geq 3-i}(1-q^j)}.
\end{equation}

On the other hand, the homogeneous components of weight $h$ of
$$R=\frac{\textbf{K}[x_j,j\geq 3-i]}{<x_j^2,x_jx_{j+1},j\geq 3-i>}$$

is generated by the monomials $x_{i_1}\cdots x_{i_r}$ such that $i_1+ \cdots +i_r=h$ and
 which are not divisible by neither $x_j^2$ nor $x_jx_{j+1},$ for any $j \in \textbf{N}.$ 
 So the data of such a monomial is equivalent to the data of a partitions of $h$ without equal nor consecutive parts and the part $1$ appears at most $i-1$ times (because of the
 condition $j \geq 3-i$ in the indices of the variables $x_j$). Hence we have 

$$\mathcal{HP}_R(q)=\sum_{h \geq 0}\mid \mathcal{T}_i(h)\mid q^h ,$$ which is the left side of the identities (\ref{rr1}) (for $i=2)$ and of (\ref{rr2}) (for $i=1).$ From the equalities (\ref{avd}), (\ref{rr1}) and (\ref{rr2}), we get that for $i \in \{1,2\},$ we have:
$$\frac{ \sum_{\lambda \in \mathcal{N}_i }\delta(\lambda)q^{\mid \lambda \mid}}{\prod\limits_{j\geq 3-i}(1-q^j)}=
\frac{\prod\limits_{j\geq 3-i,~j~\equiv~ 0,\pm{i}~\mbox{mod}~5}(1-q^j)}{\prod\limits_{j \geq 3-i}(1-q^j)},$$

and the theorem follows.

\end{proof}
\textbf{Acknowledgment} The first author would like to acknowledge the National Council for Scientific Research of Lebanon (CNRS-L) and the Agence Universitaire de la Francophonie in cooperation with Lebanese University for granting a doctoral fellowship to her.

\bigskip
\author[Z. Mohsen]{
{Universit\'e de Paris, Sorbonne Universit\'e, CNRS, Institut de Math\'ematiques de Jussieu-Paris Rive Gauche, F-75013 Paris, France 
and Department of Mathematics, Lebanese University, Hadath, Beirut, Lebanon}\\
{zahraa.mohsen@imj-prg.fr}\\

\author[H. Mourtada]
{Universit\'e de Paris, Sorbonne Universit\'e, CNRS, Institut de Math\'ematiques de Jussieu-Paris Rive Gauche, F-75013 Paris, France }\\
{hussein.mourtada@imj-prg.fr}

\end{document}